\RequirePackage{lineno}
\documentclass[12pt,a4paper,leqno,verbatim]{amsart}

%%%%%%%%%%%% METHOD FOR HOUR AND MINUTE %%%%%%%%%%%%%
%=====================================================================
\newcounter{minutes}\setcounter{minutes}{\time}
\divide\time by 60
\newcounter{hours}\setcounter{hours}{\time}
\multiply\time by 60 \addtocounter{minutes}{-\time}
%=====================================================================

\usepackage{amssymb}
\usepackage{hyperref}
\usepackage{graphicx}
\date{}
\newfont{\cyrilic}{wncyr10 scaled 1000}
\title{Power mean inequality of generalized trigonometric functions}
\author[Barkat Ali Bhayo]{Barkat Ali Bhayo}
\email{barbha@utu.fi}
\author[Matti Vuorinen]{Matti Vuorinen}
\address{Department of Mathematics \& Statistics, University of Turku,
FI-20014 Turku, Finland}
\email{vuorinen@utu.fi}
\newcommand{\comment}[1]{}

\swapnumbers
\theoremstyle{plain}

\newtheorem{theorem}[equation]{Theorem}
\newtheorem{lemma}[equation]{Lemma}

\newtheorem{subsec}[equation]{}

\newtheorem{proposition}[equation]{Proposition}

\numberwithin{equation}{section}

\pagestyle{headings}
\setcounter{page}{1}
\addtolength{\hoffset}{-1.15cm}
\addtolength{\textwidth}{2.3cm}
\addtolength{\voffset}{0.45cm}
\addtolength{\textheight}{-0.9cm}

\begin{document}
%%%%%KAUNIS K  \K %%%%%%%%%%%%%%
\font\fFt=eusm10 %scaled 1200
\font\fFa=eusm7  %scaled 1200
\font\fFp=eusm5  %scaled 1200
\def\K{\mathchoice
 %%%displaystyle
{\hbox{\,\fFt K}}
%%%%textstyle
{\hbox{\,\fFt K}}
%%%scriptstyle
{\hbox{\,\fFa K}}
%%%%scriptscriptstyle
{\hbox{\,\fFp K}}}

\def\thefootnote{}
\footnotetext{ \texttt{\tiny File:~\jobname .tex,
          printed: \number\year-\number\month-\number\day,
          \thehours.\ifnum\theminutes<10{0}\fi\theminutes}
} \makeatletter\def\thefootnote{\@arabic\c@footnote}\makeatother
\maketitle

\begin{abstract} We study the convexity/concavity properties of the generalized
$p$-trigonometric functions in the sense of P. Lindqvist with respect to the power means.
\end{abstract}
\bigskip
{\bf 2010 Mathematics Subject Classification:} 33C99, 33B99

{\bf Keywords and phrases}: Eigenfunctions $\sin_p$, power mean, generalized
trigonometric functions, convexity, inequalities.
\bigskip

%%%%%%%%%%%%%%%%%%%%%%%%%%
%%%%%%%%%%%%%%%%%%%%%%%%%%%%%5
%%%%%%%%%%%%%%%%%%%%%%%%%%
\section{Introduction}

The generalized trigonometric and hyperbolic functions depending on
a parameter $p>1$ were studied by P. Lindqvist in 1995 \cite{lp}.
For the case when $p=2$, these functions coincide with elementary
functions. Later on numerous authors have extended this work in
various directions see \cite{be, bem0, bem, dm, lpe}.

For $t\in\mathbb{R}$ and $x,y>0$, the Power Mean $M_t$ of order $t$ is defined by

$$M_t=\displaystyle\left\{\begin{array}{lll} \displaystyle\left(\frac{x^t+y^t}{2}\right)^{1/t},\;\quad t\neq 0,\\

\sqrt{x\,y}, \quad\qquad\qquad t=0\,.\end{array}\right.$$

\begin{theorem}\label{thm1} For $p>1,\,t\geq 0$ and $r,s\in(0,1)$, we have
\begin{enumerate}
\item ${\rm arcsin}_{p}(M_t(r,s))\leq M_t({\rm arcsin}_{p}(r),{\rm arcsin}_{p}(s))$\,,\\
\item ${\rm artanh}_p(M_t(r,s))\leq M_t({\rm artanh}_p(r),{\rm artanh}_p(s))\,,$\\
\item ${\rm arctan}_p(M_t(r,s))\geq M_t({\rm arctan}_p(r),{\rm arctan}_p(s))\,,$\\
\item ${\rm arsinh}_{p}(M_t(r,s))\geq M_t({\rm arsinh}_{p}(r),{\rm arsinh}_{p}(s))$\,.
\end{enumerate}
\end{theorem}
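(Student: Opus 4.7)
The idea is to reduce each of the four inequalities to a convexity or concavity statement for a single auxiliary function. For $t>0$, setting $u=r^t$, $v=s^t$, inequality (1) becomes exactly the statement that
\[
F_t(u) := f\bigl(u^{1/t}\bigr)^t, \qquad f = \mathrm{arcsin}_p,
\]
is midpoint-convex (hence convex) on $(0,1)$ in $u$. Inequalities (2), (3), (4) reduce similarly, with (3) and (4) requiring concavity of the analogous $F_t$ built from $\mathrm{arctan}_p$ and $\mathrm{arsinh}_p$. The case $t=0$ is the geometric-mean inequality $f(\sqrt{rs}) \leq \sqrt{f(r)f(s)}$, equivalent to log-convexity (respectively log-concavity) of $\phi(w) := \log f(e^w)$; it is recovered as the limit $t \to 0^+$ of the preceding reduction.

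I would then carry out a single unified analysis. Introducing $\phi(w) = \log f(e^w)$ for a positive, increasing $f$ with $f(0)=0$, and parametrising through $w = t^{-1}\log u$, the chain rule yields
\[
u^{2}\, F_t''(u) \;=\; \frac{F_t(u)}{t}\,\Bigl[\,\phi''(w) + t\,\phi'(w)\bigl(\phi'(w)-1\bigr)\Bigr].
\]
Hence $F_t$ is convex on its domain for every $t \geq 0$ as soon as (a) $\phi'(w) \geq 1$ and (b) $\phi''(w) \geq 0$; for the reverse inequalities (3), (4), the mirror condition is $0 \leq \phi'(w) \leq 1$ together with $\phi''(w) \leq 0$, which yields concavity of $F_t$.

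Condition (a) is essentially automatic: for convex $f$ with $f(0)=0$ the secant/tangent comparison gives $f(y) \leq yf'(y)$, so $\phi'(w) = yf'(y)/f(y) \geq 1$ with $y=e^w$; the concave case is symmetric. Condition (b), the monotonicity of $y \mapsto y f'(y)/f(y)$, is the substantive content. Differentiating and clearing denominators reduces it to
\[
f(y)\bigl[f'(y) + y f''(y)\bigr] \;\geq\; y f'(y)^2
\]
(with $\leq$ in cases (3), (4)). For $f=\mathrm{arcsin}_p$, plugging in $f'(y)=(1-y^p)^{-1/p}$ and $f''(y)=y^{p-1}(1-y^p)^{-1-1/p}$ collapses this to an elementary inequality of the form $\mathrm{arcsin}_p(y) \geq y(1-y^p)^{(p-1)/p}$, which I would settle by a boundary check at $y=0$ and a derivative comparison on $(0,1)$. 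The other three functions are handled analogously, each reducing to an elementary inequality on $(0,1)$; the chief obstacle is simply to execute these four case-by-case computations without algebraic slips, particularly managing the signs and the shifts in the exponents that appear in the hyperbolic cases.
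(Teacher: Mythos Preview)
Your proposal is correct and takes a genuinely different route from the paper's. The paper fixes $0<x<y<1$, sets $u=M_t(x,y)$, and differentiates $g(x)=g_i(u)^t-\tfrac12(g_i(x)^t+g_i(y)^t)$ in $x$, reducing everything to the monotonicity of
\[
f_i(x)=\left(\frac{g_i(x)}{x}\right)^{t-1} g_i'(x)
\]
for each $t\geq 0$ (this is Lemma~\ref{lem1}). That lemma in turn requires a case split: for $m=t-1\geq 0$ it follows quickly from the monotone l'H\^opital rule, but for $m\in[-1,0)$ the paper resorts to explicit differentiation together with the hypergeometric bounds of Lemma~\ref{bvthm1}. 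Your approach instead rewrites the second derivative of $F_t(u)=f(u^{1/t})^t$ as
\[
u^2 F_t''(u)=\frac{F_t(u)}{t}\bigl[\phi''(w)+t\,\phi'(w)(\phi'(w)-1)\bigr],\qquad \phi(w)=\log f(e^w),
\]
and reads off two $t$-independent sufficient conditions: (a) $\phi'\geq 1$ (automatic from the convexity of $f$ with $f(0)=0$) and (b) $\phi''\geq 0$, together with their mirrors for (3) and (4). The case $t=0$ is exactly condition (b). So you trade the paper's $t$-dependent monotonicity lemma for a single log-convexity check of $yf'(y)/f(y)$, which for each of the four functions collapses to an elementary inequality such as $\arcsin_p(y)\geq y(1-y^p)^{(p-1)/p}$; these are in fact immediate from $\arcsin_p(y)\geq y$, $\arctan_p(y)\leq y$, etc., so the ``derivative comparison'' you anticipate is not even needed. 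The upshot is that your decomposition is cleaner---it eliminates the $m\in[-1,0)$ case analysis and the appeal to Lemma~\ref{bvthm1} entirely---while the paper's argument has the minor advantage of keeping the computation at the level of first derivatives.
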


\begin{theorem}\label{thm2} For $p>1,\,t\geq 1$ and $r,s\in(0,1)$, the following relations hold
\begin{enumerate}
\item ${\rm sin}_{p}(M_t(r,s))\geq M_t({\rm sin}_{p}(r),{\rm sin}_{p}(s))$\,,\\
\item ${\rm cos}_{p}(M_t(r,s))\leq M_t({\rm cos}_{p}(r),{\rm cos}_{p}(s))$\,,\\
\item ${\rm tan}_p(M_t(r,s))\leq M_t({\rm tan}_p(r),{\rm tan}_p(s))\,,$\\
\item ${\rm tanh}_p(M_t(r,s))\geq M_t({\rm tanh}_p(r),{\rm tanh}_p(s))\,,$\\
\item ${\rm sinh}_{p}(M_t(r,s))\leq M_t({\rm sinh}_{p}(r),{\rm arsinh}_{p}(s))$\,.
\end{enumerate}
\end{theorem}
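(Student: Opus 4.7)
My plan is to obtain parts (1), (3), (4), (5) of Theorem \ref{thm2} from Theorem \ref{thm1} by an inverse-substitution trick, and to treat part (2) directly, since Theorem \ref{thm1} contains no $\arccos_p$ counterpart. For (1), given $r,s\in(0,1)$ in the domain of Theorem \ref{thm2}, substitute $r\mapsto\sin_p(r)$, $s\mapsto\sin_p(s)$ into Theorem \ref{thm1}(1); using $\arcsin_p\circ\sin_p=\mathrm{id}$ the inequality becomes
\[\arcsin_p\bigl(M_t(\sin_p(r),\sin_p(s))\bigr)\le M_t(r,s),\]
and applying the increasing function $\sin_p$ to both sides yields (1). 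The same manoeuvre, inserting $\tanh_p$, $\tan_p$, $\sinh_p$ into parts (2), (3), (4) of Theorem \ref{thm1}, gives parts (4), (3), (5) of Theorem \ref{thm2}; the inequality reversals in Theorem \ref{thm1}(3)--(4) line up exactly with the reversed directions in Theorem \ref{thm2}(3),(5). Some care is needed to ensure the substituted values remain in $(0,1)$, which may require either a slight extension of Theorem \ref{thm1} to larger $r,s$ intervals (via its own proof) or a direct argument on the remaining sub-interval.

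For (2), the substitution is unavailable, so I would argue directly. Setting $u=r^t,\ v=s^t$ turns the claim $\cos_p(M_t(r,s))\le M_t(\cos_p r,\cos_p s)$ into the midpoint convexity
\[h\!\left(\tfrac{u+v}{2}\right)\le \tfrac{h(u)+h(v)}{2},\qquad h(x):=\cos_p\bigl(x^{1/t}\bigr)^{\!t},\]
on the image interval. Using $\sin_p^p+\cos_p^p=1$, which yields $\cos_p'(y)=-\sin_p^{p-1}(y)\,\cos_p^{2-p}(y)$, I would compute $h''(x)$ in terms of $y=x^{1/t}$ and attempt to show $h''(x)\ge 0$ for $t\ge 1$ and $y$ in the relevant range.

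The main obstacle is this sign determination. The expression for $h''$ produces a sum in which a factor $(t-1)$---the place where the hypothesis $t\ge 1$ enters---multiplies the quantity $y\cos_p(y)-\sin_p(y)$, which has no obvious sign, together with a further contribution coming from the $p$-Pythagorean identity. To close the argument I would invoke standard inequalities for the generalized trigonometric functions, notably $\tan_p(y)>y$ on $(0,\pi_p/2)$ and the monotonicity of $\sin_p(y)/y$, to pin down the signs of the offending terms and conclude $h''\ge 0$.
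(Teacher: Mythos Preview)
Your route is genuinely different from the paper's. The paper does not invert Theorem~\ref{thm1}; it proves a direct analogue of Lemma~\ref{lem1} for the functions $\sin_p,\cos_p,\tan_p,\tanh_p,\sinh_p$ themselves (Lemma~\ref{kuclem}), obtaining the required monotonicity of $(g(x)/x)^{m-1}g'(x)$ from the convexity/concavity inheritance under inversion (Lemma~\ref{lemku}) together with the monotone l'H\^opital rule (Lemma~\ref{125}), and then reruns verbatim the differentiation scheme from the proof of Theorem~\ref{thm1}. This handles all five items uniformly and is why the hypothesis is $t\ge 1$ (the lemma is proved only for $m\ge 1$). Your inverse--substitution trick is clean for (1) and (4), and in fact yields those cases for every $t\ge 0$, stronger than stated, since $\sin_p$ and $\tanh_p$ map $(0,1)$ into $(0,1)$.

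For (3) and (5), however, the gap you flag is real and not minor: $b_p=\arctan_p(1)=\int_0^1(1+t^p)^{-1}\,dt<1$ and $c_p=\mathrm{arsinh}_p(1)=\int_0^1(1+t^p)^{-1/p}\,dt<1$ for every $p>1$ (e.g.\ $b_2=\pi/4$), so $\tan_p(r)$ and $\sinh_p(r)$ leave $(0,1)$ on a nonempty portion of the domain and Theorem~\ref{thm1} as stated does not apply. Repairing this by extending Theorem~\ref{thm1}(3)--(4) to all $r,s>0$ means re-proving Lemma~\ref{lem1}(3)--(4) on $(0,\infty)$, which is essentially the labour the paper does directly via Lemma~\ref{kuclem}. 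Your plan for (2) is also unfinished, and in fact cannot close in the stated direction: for $p=2$, $t=1$ one has $h(x)=\cos x$, which is concave on $(0,1)$, so $h''\ge 0$ fails and the printed inequality in (2) is violated (take $r=0.1$, $s=0.9$). The intended sign in (2) should be $\ge$; with that correction either your convexity computation (now aiming at $h''\le 0$) or the paper's uniform method (with the monotonicity in Lemma~\ref{kuclem}(3) read as ``decreasing'') goes through.
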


In \cite{bv1}, there are some results which are the special case of the above theorems when $t=0$ and $t=2$.

Generalized convexity/concavity with respect to general mean values has been studied
   recently in \cite{avv1}. 

Let $f:I\to (0,\infty)$ be continuous, where $I$ is a subinterval of $(0,\infty)$. Let $M$
and $N$ be any two mean values. We say that  $f$ is $MN$-convex (concave) if
$$f (M(x, y)) \leq (\geq)
N(f (x), f (y)) \,\, \text{ for \,\, all} \,\, x,y \in I\,.$$

In conclusion, we see that the above results are $(M_t, M_t)$-convexity
or $(M_t, M_t)$-concavity properties
of the functions involved. In view of \cite{avv1}, it is natural to expect
that similar results
might also hold for some other pairs  $(M,N)$ of mean values.
%%%%%%%%%%%%%%%%%%%%%%
%%%%%%%%%%%%%%%%%%%%%%
%%%%%%%%%%%%%%%%%%%%%%
%%%%%%%%%%%%%%%%%%%%%%
%%%%%%%%%%%%%%%%%%%%%%
%%%%%%%%%%%%%%%%%%%%%%
\section{Preliminaries}

We introduce some
notation and terminology for the satatement of the main results.

Given complex numbers $a,b$ and $c$ with $c\neq0,-1,-2,\ldots$,
the \emph{Gaussian hypergeometric function} is the
analytic continuation to the slit place $\mathbb{C}\setminus[1,\infty)$ of the series
$$F(a,b;c;z)={}_2F_1(a,b;c;z)=\sum^\infty_{n=0}\frac{(a,n)(b,n)}
{(c,n)}\frac{z^n}{n!},\qquad |z|<1.$$
Here $(a,0)=1$ for $a\neq 0$, and $(a,n)$ is the \emph{shifted factorial function}
 or the \emph{Appell symbol}
$$(a,n)=a(a+1)(a+2)\cdots(a+n-1)$$
for $n\in \mathbb{Z}_+$, see \cite{AS}.

Lets start the
discussion of eigenfunctions of one-dimensional
 $p$-Laplacian $\Delta_p$ on $(0,1)$,\,
  $p\in(1,\infty).$ The eigenvalue problem \cite{dm}
$$
-\Delta_p u=-\left(|u^{'}|^{p-2}u^{'}\right)^{'}
=\lambda|u|^{p-2}u,\quad u(0)=u(1)=0,
$$
has eigenvalues
$$\lambda_n=(p-1)(n \pi_p)^p,\,$$
and eigenfunctions
$$\sin_p(n \pi_p\, t),\quad n\in\mathbb{N},\,$$
where $\sin_p$ is the inverse function of ${\rm arcsin}_p$\,, which is
defined below, and
$$\pi_p=\frac{2}{p}\int^1_0(1-s)^{-1/p}s^{1/p-1}ds=\frac{2}
{p}\,B\left(1-\frac{1}{p},\frac{1}{p}\right)=\frac{2 \pi}{p\,\sin(\pi/p)}\,,$$
with $\pi_2=\pi$.

Lets consider the following
homeomorphisms
\begin{equation*}
\sin_p:(0,a_p)\to I,\quad\cos_p:(0,a_p)\to I,\quad\tan_p:(0,b_p)\to I,\,\\
\end{equation*}
\begin{equation*}
\sinh_p:(0,c_p)\to I, \quad \tanh_p:(0,\infty)\to I\,,
\end{equation*}
where $I=(0,1)$ and
$$a_p=\frac{\pi_p}{2},
\,b_p=2^{-1/p}
F\left(\frac{1}{p},\frac{1}{p};1+\frac{1}{p};\frac{1}{2}\right),\,
c_p=F\left(1\,,\frac{1}{p}
;1+\frac{1}{p}\,;1\right)\,.$$

For $x\in I$, their inverse functions are defined as
\begin{eqnarray*}
{\rm arcsin}_p\,x&=&\int^x_0(1-t^p)^{-1/p}dt=x\,F\left(\frac{1}{p},
\frac{1}{p};1+\frac{1}{p};x^p\right)\\
                 &=&x(1-x^p)^{(p-1)/p}F\left(1,1;1+\frac{1}{p};x^p\right)\,,\\
{\rm arctan}_p\,x&=&\int^x_0(1+t^p)^{-1}dt=x \,
F\left(1,\frac{1}{p};1+\frac{1}{p};-x^p\right)\,\\
&=& \left(\frac{x^p}{1+x^p}\right)^{1/p}F\left(\frac{1}{p},\frac{1}{p};1+\frac{1}{p};\frac{x^p}{1+x^p}\right)\,,\\
{\rm arsinh}_p\,x&=&\int^x_0(1+t^p)^{-1/p}dt=
x\,F\left(\frac{1}{p}\,,\frac{1}{p};1+\frac{1}{p};-x^p\right)\\
&=& \left(\frac{x^p}{1+x^p}\right)^{1/p}F\left(1,\frac{1}{p};1+\frac{1}{p};\frac{x^p}{1+x^p}\right)\,,\\
{\rm artanh}_p\,x&=&\int^x_0(1-t^p)^{-1}dt=x\,F\left
(1\,,\frac{1}{p};1+\frac{1}{p};x^p\right)\,,
\end{eqnarray*}
and by \cite[Prop 2.2]{be}  ${\rm arccos}_p\,x={\rm arcsin}_p((1-x^p)^{1/p})$.
In particular, they
reduce to the familiar functions for the case $p=2$.

The above functions were generalized in two parameters $(p,q)$ in \cite{bv1,t,egl}.

For $x\in I=[0,1]$
$$
{\rm arcsin}_{p,q}\,x=\int^x_0(1-t^q)^{-1/p}dt=
x\,F\left(\frac{1}{p},\frac{1}{q};1+\frac{1}{p};x^q\right).$$
We also define
${\rm arccos}_{p,q}\,x={\rm arcsin}_{p,q}((1-x^p)^{1/q})$
(see \cite[Prop. 3.1]{egl}),
and
$${\rm arsinh}_{p,q}\,x=\int^x_0(1+t^q)^{-1/p}dt=
x\,F\left(\frac{1}{p},\frac{1}{q};1+\frac{1}{q};-x^q\right).$$
Their inverse functions are
$$\sin_{p,q}:(0,\pi_{p,q}/2)\to (0,1),\quad \cos_{p,q}:(0,\pi_{p,q}/2)\to (0,1), $$
$$\sinh_{p,q}:(0,n_{p,q})\to (0,1),\quad
n_{p,q}=\frac{1}{2^{1/p}}
F\left(1,\frac{1}{p};1+\frac{1}{q};\frac{1}{2}\right).$$

For easy reference we record the following lemma from \cite{avvb},
sometimes which is called the \emph{monotone l'Hospital rule}.
\begin{lemma}\label{125}\cite[Theorem 1.25]{avvb}
For $-\infty<a<b<\infty$,
let $f,g:[a,b]\to \mathbb{R}$
be continuous on $[a,b]$, and be differentiable on
$(a,b)$. Let $g^{'}(x)\neq 0$
on $(a,b)$. If $f^{'}(x)/g^{'}(x)$ is increasing
(decreasing) on $(a,b)$, then so are
$$[f(x)-f(a)]/[g(x)-g(a)]\quad and \quad [f(x)-f(b)]/[g(x)-g(b)].$$
If $f^{'}(x)/g^{'}(x)$ is strictly monotone,
then the monotonicity in the conclusion
is also strict.
\end{lemma}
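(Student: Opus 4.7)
The plan is to reduce the statement to checking the sign of the derivative of the quotient $F(x) := [f(x)-f(a)]/[g(x)-g(a)]$ on $(a,b)$, and then to infer the sign from Cauchy's mean value theorem applied to $f$ and $g$. A parallel argument will handle $G(x) := [f(x)-f(b)]/[g(x)-g(b)]$.

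First I would observe that since $g'$ is continuous and nonvanishing on $(a,b)$, $g'$ has a constant sign there, and hence $g$ is strictly monotone on $[a,b]$; in particular $g(x)-g(a) \neq 0$ and $g(x)-g(b)\neq 0$ for $x\in(a,b)$, so $F$ and $G$ are well defined and differentiable. A direct computation gives
\begin{equation*}
F'(x) \;=\; \frac{f'(x)\,[g(x)-g(a)] - [f(x)-f(a)]\,g'(x)}{[g(x)-g(a)]^{2}}.
\end{equation*}
Factoring $g'(x)[g(x)-g(a)]$ out of the numerator, we rewrite
\begin{equation*}
F'(x) \;=\; \frac{g'(x)\,[g(x)-g(a)]}{[g(x)-g(a)]^{2}}\left(\frac{f'(x)}{g'(x)} - \frac{f(x)-f(a)}{g(x)-g(a)}\right).
\end{equation*}
By Cauchy's mean value theorem there exists $\xi\in(a,x)$ with $[f(x)-f(a)]/[g(x)-g(a)] = f'(\xi)/g'(\xi)$, so the parenthesised factor equals $f'(x)/g'(x) - f'(\xi)/g'(\xi)$.

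Next I would argue about signs. Because $g'$ has constant sign on $(a,b)$, the product $g'(x)[g(x)-g(a)]$ is strictly positive on $(a,b)$ (both factors share the sign of $g'$). Hence the sign of $F'(x)$ coincides with the sign of $f'(x)/g'(x) - f'(\xi)/g'(\xi)$. If $f'/g'$ is increasing on $(a,b)$, then since $\xi<x$ this difference is $\geq 0$, so $F'\geq 0$ and $F$ is increasing; strict monotonicity of $f'/g'$ forces strict inequality and hence strict monotonicity of $F$. The decreasing case is identical with reversed inequalities.

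For the ratio $G(x)=[f(x)-f(b)]/[g(x)-g(b)]$ the same computation applies, except that in the Cauchy MVT step the intermediate point $\eta$ lies in $(x,b)$, so $\eta>x$. However, the sign of the prefactor $g'(x)[g(x)-g(b)]$ is now negative (the two factors have opposite signs), and the sign of $f'(x)/g'(x)-f'(\eta)/g'(\eta)$ with $\eta>x$ is reversed relative to the previous case; the two sign reversals cancel, so the conclusion is the same. The main (very mild) obstacle is simply the bookkeeping of these signs; everything else is mechanical once Cauchy's MVT is invoked.
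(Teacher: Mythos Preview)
The paper does not supply its own proof of this lemma: it is quoted verbatim from \cite[Theorem 1.25]{avvb} and recorded only ``for easy reference,'' so there is no in-paper argument to compare against. Your proof is the standard one (compute the derivative of the quotient and invoke Cauchy's mean value theorem), and it is correct in outline and in the sign bookkeeping for both $F$ and $G$.

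One small point: you justify that $g'$ has constant sign by asserting that $g'$ is continuous, but the hypotheses only give differentiability of $g$ on $(a,b)$, not continuity of $g'$. The conclusion you need still holds, however, because derivatives have the intermediate value property (Darboux's theorem): if $g'$ took both positive and negative values on $(a,b)$ it would have to vanish somewhere, contradicting $g'(x)\neq 0$. With that adjustment your argument goes through unchanged.
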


For the next two lemmas see \cite[Theorems 1.1, 1.2, 2.5 \& Lemma 3.6]{bv}.

\begin{lemma}\label{bvthm1} For $p>1$ and $x\in(0,1)$, we have\\

\begin{enumerate}
\item $\left(1+\displaystyle\frac{x^p}{p(1+p)}\right)x<
{\rm arcsin}_p\,x< \displaystyle\frac{\pi_p}{2}\,x$,\\

\item $\left(1+\displaystyle\frac{1-x^p}{p(1+p)}\right)(1-x^p)^{1/p}
< {\rm arccos}_p\,x< \displaystyle\frac{\pi_p}{2}\,
(1-x^p)^{1/p}$,\\

\item $\displaystyle\frac{(p(1+p)(1+x^p)+x^p)x}{p(1+p)(1+x^p)^{1+1/p}}<
{\rm arctan}_p\,x
< 2^{1/p}\,\,b_p\,
\left(\displaystyle\frac{x^p}{1+x^p}\right)^{1/p}$.\\

\item $z\left (1+\displaystyle\frac{\log(1+x^p)}{1+p}\right)
< {\rm arsinh}_p\,x
< z\left(1+\frac{1}{p}\log(1+x^p)\right),
\,\,z=\displaystyle\left(\frac{x^p}{1+x^p}\right)^{1/p},$\\

\item
$x\left(1-\displaystyle\frac{1}{1+p}\log(1-x^p)\right)< {\rm artanh}_p\,x
< x\left(1-\frac{1}{p}\log(1-x^p)\right)\,.$
\end{enumerate}
\end{lemma}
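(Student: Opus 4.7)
The plan is to reduce all five bounds to two core series computations, one for ${\rm arcsin}_p$ (handling (1)--(3)) and one for ${\rm artanh}_p$ (handling (4), (5)). The preliminaries already provide two distinct hypergeometric representations for each of ${\rm arctan}_p$ and ${\rm arsinh}_p$; matching the second representation of ${\rm arctan}_p$ against the defining series of ${\rm arcsin}_p$, and similarly for ${\rm arsinh}_p$ versus ${\rm artanh}_p$, yields the bridging identities
\[
{\rm arctan}_p(x)={\rm arcsin}_p\!\left(\frac{x}{(1+x^p)^{1/p}}\right),\qquad {\rm arsinh}_p(x)={\rm artanh}_p\!\left(\frac{x}{(1+x^p)^{1/p}}\right),
\]
through which bounds are transported between functions.

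For (1), I would expand
\[
{\rm arcsin}_p(x)=\sum_{n=0}^{\infty}\frac{(1/p,n)^2}{(1+1/p,n)\,n!}\,x^{pn+1};
\]
all coefficients are positive, the $n=0$ term is $x$, and the $n=1$ term is $x^{p+1}/(p(p+1))$, so truncation at $n=1$ produces the left inequality. For the right inequality, apply Lemma \ref{125} to $f(x)={\rm arcsin}_p(x)$ and $g(x)=x$: since $f'/g'=(1-x^p)^{-1/p}$ is strictly increasing on $(0,1)$, so is ${\rm arcsin}_p(x)/x$, with limit $\pi_p/2$ as $x\to 1^-$. Part (2) is immediate from ${\rm arccos}_p(x)={\rm arcsin}_p((1-x^p)^{1/p})$. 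For (3), substitute $y=x/(1+x^p)^{1/p}\in(0,2^{-1/p})$ into (1); after collecting over the common denominator $p(p+1)(1+x^p)^{1+1/p}$, the left inequality of (1) becomes the left inequality of (3). The right inequality of (3) follows because ${\rm arcsin}_p(y)/y$ is strictly increasing and equals $F(1/p,1/p;1+1/p;1/2)=2^{1/p}b_p$ at $y=2^{-1/p}$.

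For (5), using $(1,n)=n!$ one finds $(1/p,n)/(1+1/p,n)=1/(np+1)$, so that
\[
{\rm artanh}_p(x)=\sum_{n=0}^{\infty}\frac{x^{pn+1}}{np+1}.
\]
Combining this with $-\log(1-x^p)=\sum_{n\geq 1}x^{pn}/n$ and the termwise chain $1/(n(p+1))\leq 1/(np+1)<1/(np)$ (strict on the left for $n\geq 2$, always strict on the right) yields both halves of (5). Part (4) then follows from (5) applied at $z=x/(1+x^p)^{1/p}$, since $1-z^p=1/(1+x^p)$ converts $-\log(1-z^p)$ into $\log(1+x^p)$. The main obstacle I foresee is the bookkeeping in (3): the intricate left-hand side must emerge cleanly from the substitution, and $2^{1/p}b_p$ must be recognized as the correct supremum of ${\rm arcsin}_p(y)/y$ on the relevant subinterval; every other step is routine manipulation of power series together with the monotone l'Hospital rule.
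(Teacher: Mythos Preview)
Your argument is correct in every part: the series truncation for the lower bound in (1) and the monotone l'H\^opital rule for the upper bound are both valid; the substitutions $y=(1-x^p)^{1/p}$ and $y=x/(1+x^p)^{1/p}$ transport (1) to (2) and (3) exactly as you describe, with $2^{1/p}b_p$ correctly identified as the supremum of ${\rm arcsin}_p(y)/y$ on $(0,2^{-1/p})$; and the termwise comparison $1/(n(p+1))\le 1/(np+1)<1/(np)$ yields (5), from which (4) follows via the identity ${\rm arsinh}_p(x)={\rm artanh}_p\!\big(x/(1+x^p)^{1/p}\big)$ and $-\log(1-z^p)=\log(1+x^p)$.

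As for comparison with the paper: the paper does not prove this lemma at all. It simply records it as Lemma~\ref{bvthm1} with the attribution ``For the next two lemmas see \cite[Theorems 1.1, 1.2, 2.5 \& Lemma 3.6]{bv},'' deferring entirely to the authors' earlier preprint. So your proposal actually supplies a complete, self-contained proof where the present paper offers none; your two-core-series strategy (one hypergeometric series for the ${\rm arcsin}_p$ family, one for the ${\rm artanh}_p$ family, linked by the Euler-type transformations already listed in the preliminaries) is economical and makes the logical dependencies among (1)--(5) transparent.
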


\begin{lemma}\label{bvthm2} For $p,q>1$ and $r,s\in(0,1)$, the following inequalities hold:
\\
\begin{enumerate}
\item ${\rm arcsin}_p(\sqrt{r\,s})\leq \sqrt{{\rm arcsin}_p(r)\,{\rm arcsin}_p(s)}\,,$\\

\item ${\rm artanh}_p(\sqrt{r\,s})\leq \sqrt{{\rm artanh}_p(r)\,{\rm artanh}_p(s)}\,,$\\

\item $\sqrt{{\rm arsinh}_p(r)\,{\rm arsinh}_p(s)}\leq
{\rm arsinh}_p(\sqrt{r\,s})\,,$\\

\item $\sqrt{{\rm arctan}_p(r)\,{\rm arctan}_p(s)}\leq{\rm arctan}_p(\sqrt{r\,s})\,,$\\

\item $\pi_{\sqrt{p\,q}}\leq \sqrt{\pi_p\,\pi_q}$.

\end{enumerate}
\end{lemma}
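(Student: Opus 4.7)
The key observation is that each of the inequalities (1)--(4) is equivalent to saying that $u \mapsto \log f(e^u)$ is convex (for (1), (2)) or concave (for (3), (4)) on $(-\infty,0)$, where $f$ stands for the inverse trigonometric function in question. Since
\[
\frac{d}{du}\log f(e^u)=\frac{e^u f'(e^u)}{f(e^u)}=\phi(e^u),\qquad \phi(x):=\frac{xf'(x)}{f(x)},
\]
convexity/concavity of $\log f(e^u)$ on $(-\infty,0)$ is the same as monotone increase/decrease of $\phi$ on $(0,1)$. My plan is, for each $f\in\{{\rm arcsin}_p,{\rm artanh}_p,{\rm arctan}_p,{\rm arsinh}_p\}$, to apply the monotone l'Hospital rule (Lemma~\ref{125}) with $A(x)=xf'(x)$ and $B(x)=f(x)$: both functions vanish at $0$, and $B'(x)=f'(x)$ is an explicit algebraic expression, so $A'/B'$ collapses to something transparent.

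Carrying this out gives, after a short differentiation, the following expressions for $A'(x)/B'(x)$:
\[
{\rm arcsin}_p:(1-x^p)^{-1},\quad {\rm artanh}_p:(1-x^p)^{-2}\bigl(1+(p-1)x^p\bigr),
\]
\[
{\rm arsinh}_p:(1+x^p)^{-1},\quad {\rm arctan}_p:\frac{1-(p-1)x^p}{1+x^p}.
\]
For $p>1$ and $x\in(0,1)$ the first two are strictly increasing and the third is strictly decreasing by inspection; the fourth differentiates to $-p^2 x^{p-1}/(1+x^p)^2<0$, so it too is strictly decreasing. Lemma~\ref{125} transfers these monotonicities to $\phi=A/B$, which yields (1)--(4).

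For (5), put $\theta=\pi/p$, so that $\pi_p=2\pi/(p\sin\theta)$. Setting $p=e^v$ and $\widetilde L(v)=\log\pi_{e^v}$, the inequality $\pi_{\sqrt{pq}}\le\sqrt{\pi_p\pi_q}$ is exactly the midpoint convexity of $\widetilde L$ in $v$. Using $d\theta/dv=-\theta$, a direct calculation gives
\[
\widetilde L''(v)=\frac{\theta^2}{\sin^2\theta}-\theta\cot\theta=\frac{\theta\bigl(2\theta-\sin 2\theta\bigr)}{2\sin^2\theta}\ge0
\]
on $(0,\pi)$ by the elementary bound $\sin x\le x$. Continuity then upgrades midpoint convexity to full convexity, giving (5).

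The step I expect to be the main obstacle is part (4): for $p>2$ the quotient $A'/B'$ is not of constant sign on $(0,1)$, so one must recall that Lemma~\ref{125} needs only monotonicity of $A'/B'$, not positivity; sign changes of $A'/B'$ are irrelevant to the conclusion. Once this subtlety is acknowledged, each of (1)--(4) reduces to a single elementary differentiation and (5) to the classical estimate $\sin x\le x$.
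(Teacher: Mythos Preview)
Your argument is correct. The paper itself does not prove this lemma; it merely quotes it from \cite[Theorems 1.1, 1.2, 2.5 \& Lemma 3.6]{bv}, so there is no in-paper proof to compare against. What you provide is a clean, self-contained proof that stays entirely within the tools the present paper already sets up (Lemma~\ref{125}).

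Your route---reducing (1)--(4) to monotonicity of $\phi(x)=xf'(x)/f(x)$ via the log-convexity reformulation, and then applying the monotone l'H\^opital rule to $A(x)=xf'(x)$, $B(x)=f(x)$---is arguably more transparent than anything one might expect from the cited source, since all four cases collapse to a single elementary differentiation. The treatment of (5) by computing $\widetilde L''(v)=\theta(2\theta-\sin 2\theta)/(2\sin^2\theta)\ge 0$ is likewise direct. One small typo: for ${\rm artanh}_p$ your displayed $A'/B'$ should read $(1-x^p)^{-1}\bigl(1+(p-1)x^p\bigr)$, not $(1-x^p)^{-2}\bigl(1+(p-1)x^p\bigr)$; the corrected expression is of course still strictly increasing on $(0,1)$, so the conclusion is unaffected.
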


\begin{lemma}\label{lem1} For $m\geq -1,\,p>1$, the following functions
\begin{enumerate}
\item $f_1(x)=\displaystyle\left(\frac{{\rm arcsin}_p\,x}{x}\right)^{m}\frac{d}{dx}({\rm arcsin}_p\,x)$,\\
\item $f_2(x)=\displaystyle\left(\frac{{\rm artanh}_p\,x}{x}\right)^{m}\frac{d}{dx}({\rm artanh}_p\,x)$,\\
are increasing in $x\in(0,1)$, and
\item $f_3(x)=\displaystyle\left(\frac{{\rm arctan}_p\,x}{x}\right)^{m}\frac{d}{dx}({\rm arctan}_p\,x)$,\\
\item $f_4(x)=\displaystyle\left(\frac{{\rm arsinh}_p\,x}{x}\right)^{m}\frac{d}{dx}({\rm arsinh}_p\,x)$,\\
\end{enumerate}
are decreasing in $x\in(0,1)$.
\end{lemma}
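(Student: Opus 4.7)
The plan is to write each $f_i(x) = g_i(x)^m h_i(x)$ with $g_i(x) = \phi_i(x)/x$ and $h_i(x) = \phi_i'(x)$, where $\phi_1={\rm arcsin}_p$, $\phi_2={\rm artanh}_p$, $\phi_3={\rm arctan}_p$, $\phi_4={\rm arsinh}_p$. Since $g_i$ is the mean of $h_i$ on $[0,x]$, the pair $(g_i,h_i)$ has a common direction of monotonicity on $(0,1)$: both increasing for $i=1,2$ (the integrands $(1-t^p)^{-1/p}$ and $(1-t^p)^{-1}$ are increasing), both decreasing for $i=3,4$ (the integrands $(1+t^p)^{-1}$ and $(1+t^p)^{-1/p}$ are decreasing). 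For $m\ge 0$ the product of two positive monotone functions of matching direction yields the claim at once.

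The real work is the range $-1\le m<0$, where I would use the factorization
$$ f_i(x) = g_i(x)^{m+1}\cdot \frac{x\phi_i'(x)}{\phi_i(x)}. $$
The exponent $m+1\in[0,1)$, so $g_i^{m+1}$ is constant or monotone in the same direction as $g_i$; it thus suffices to show that the sharp ratio $x\phi_i'(x)/\phi_i(x)$ is increasing for $i=1,2$ and decreasing for $i=3,4$. Here the second hypergeometric forms from the Preliminaries are indispensable. For $i=1$ the identity ${\rm arcsin}_p x=x(1-x^p)^{(p-1)/p}F(1,1;1+1/p;x^p)$ gives
$$ \frac{x\phi_1'(x)}{\phi_1(x)}=\frac{1}{(1-x^p)\,F(1,1;1+1/p;x^p)}, $$
and the Taylor coefficients $c_n=n!/(1+1/p,n)$ of $F(1,1;1+1/p;z)$ satisfy $c_n/c_{n-1}=n/(n+1/p)<1$, so $(1-z)F(1,1;1+1/p;z)=1+\sum_{n\ge1}(c_n-c_{n-1})z^n$ has all higher coefficients negative; it is therefore decreasing in $z=x^p$, and the ratio is increasing. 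An identical coefficient argument on $({\rm artanh}_p x)/x=\sum_n x^{np}/(1+np)$ disposes of $i=2$.

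For $i=3,4$ the substitution $u=x^p/(1+x^p)$ is cleaner. Using ${\rm arsinh}_p x=(x^p/(1+x^p))^{1/p}F(1,1/p;1+1/p;u)$ one reads off directly
$$ \frac{\phi_4(x)(1+x^p)^{1/p}}{x}=F\!\left(1,\tfrac{1}{p};1+\tfrac{1}{p};u\right)=\sum_n\frac{u^n}{1+np}, $$
a power series with positive coefficients in the strictly increasing argument $u(x)$; hence the reciprocal $h_4/g_4$ is decreasing. For $i=3$ the representation ${\rm arctan}_p x=(x^p/(1+x^p))^{1/p}F(1/p,1/p;1+1/p;u)$ together with the Euler transformation $F(1/p,1/p;1+1/p;u)=(1-u)^{(p-1)/p}F(1,1;1+1/p;u)$ and $1-u=1/(1+x^p)$ collapses to $\phi_3(x)(1+x^p)/x=F(1,1;1+1/p;u)$, again a positive-coefficient series in $u(x)$ and hence increasing; so $h_3/g_3$ is decreasing. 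Multiplying by $g_i^{m+1}$ then finishes each of the four cases.

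The main obstacle is precisely the subrange $m\in[-1,0)$, where $g_i^m$ and $h_i$ pull against each other and direct differentiation is unpleasant because $\phi_i$ is itself defined by a hypergeometric integral. The factorization $f_i=g_i^{m+1}(x\phi_i'/\phi_i)$ transfers the entire burden onto the logarithmic-derivative-type ratio $x\phi_i'(x)/\phi_i(x)$, whose monotonicity is exactly what the alternative hypergeometric identities recorded in the Preliminaries are set up to deliver.
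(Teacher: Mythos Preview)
Your argument is correct and in fact cleaner than the one in the paper. Both proofs split into the easy regime $m\ge 0$, handled identically (the factor $\phi_i/x$ and the derivative $\phi_i'$ share monotonicity via the monotone l'H\^opital rule, Lemma~\ref{125}), and the delicate regime $m\in[-1,0)$. There the paper sets $s=-m\in(0,1]$, differentiates $h_i(x)=(x/\phi_i(x))^{s}\phi_i'(x)$ directly, and signs the resulting expression case by case using the explicit two-sided bounds of Lemma~\ref{bvthm1} on ${\rm arcsin}_p,\ {\rm arsinh}_p$, etc. Your route avoids all of that: the factorization $f_i=g_i^{\,m+1}\cdot x\phi_i'/\phi_i$ shifts the entire question to the monotonicity of the single scale-invariant ratio $x\phi_i'(x)/\phi_i(x)$, which you read off from the alternative hypergeometric representations in the Preliminaries via a one-line coefficient check (for $i=1,2$ the sign of $c_n-c_{n-1}$ in $(1-z)F$; for $i=3,4$ a positive-coefficient series in $u=x^p/(1+x^p)$ after the Euler transformation). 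The trade-off is that the paper's approach is elementary once Lemma~\ref{bvthm1} is available, while yours is self-contained and exposes more clearly \emph{why} the result is true---it is essentially the log-convexity/concavity of $\phi_i$ in $\log x$---and would extend more readily to the two-parameter $(p,q)$-functions since the same hypergeometric forms are recorded for them.
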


\begin{proof} By definition
$$f_1(x)=\displaystyle\left(\frac{{\rm arcsin}_p\,x}{x}\right)^{m}\frac{1}{(1-x^p)^{1/p}}.$$
For $m\geq 0$, $\left(\frac{{\rm arcsin}_p\,x}{x}\right)^{m}$ is increasing by Lemma \ref{125},
and clearly $(1-x^p)^{1/p}$ is increasing. For the case $m\in[-1,0)$, we define
$$h_1(x)=\left(\frac{x}{{\rm arcsin}_p\,x}\right)^{s}\frac{1}{(1-x^p)^{1/p}},\quad\,s\in(0,1].$$
We get
\begin{eqnarray*}
h'_1(x)&=& \xi((1-x^p)^{1/p}(x^p+s(1-x^p))F_1(x)-s(1-x^p))\\
&>&\xi\left((1-x^p)^{1/p}(x^p+s(1-x^p))
(1+\frac{x^p}{p(1+p)})-s(1-x^p)\right)>0,
\end{eqnarray*}
by Lemma \ref{bvthm1}(1), where
$$\xi=\frac{(1-x^p)^{-(1+2/p)}}{x}\left(\frac{1}{F_1(x)}\right)^{1+s} \quad {\rm and}\quad F_1(x)=F\left(\frac{1}{p},
\frac{1}{p};1+\frac{1}{p};x^p\right).$$

For (2), clearly $f_2$ is increasing for $m\geq 0$. For the case when $ m\in [-1,0)$, we define
$$h_2(x)=\left(\frac{x}{{\rm artanh}_p\,x)}\right)^s\frac{1}{1-x^p},\quad s\in(0,1].$$
Differentiating with respect to $x$, we get
$$h'_2(x)=\frac{\left(F_2(x)\right)^{-(1+s)} \left(\left(p x^p-s x^p+s\right) \,
   F_2(x)-s\right)}{x \left(x^p-1\right)^2}>0,$$
where
$F_2(x)=F\left(1,\frac{1}{p};1+\frac{1}{p};x^p\right)$.

 For (3), the proof for the
case when $m\geq 0$ follows similarly from Lemma \ref{125}. For the
case $m\in[-1,0)$, let
$$h_3(x)=\displaystyle\left(\frac{{\rm arctan}_p\,x}{x}\right)^{-s}\frac{d}{dx}({\rm arctan}_p\,x),\quad\,s\in(0,1].$$
We have
\begin{eqnarray*}
h'_3(x)&=&\frac{F_3(x)^{-(1+s)}}{r(1+r^p)^2}((s+s\,r^p-p\,r^p)F_3(x)-s)\\
&<& \frac{F_3(x)^{-(1+s)}}{r(1+r^p)^2}((s+s\,r^p-s\,r^p)F_3(x)-s)\\
&=& -\frac{s\,F_3(x)^{-(1+s)}}{r(1+r^p)^2}(1-F_3(x))<0,
\end{eqnarray*}
where $F_3(x)=F\left(1,\frac{1}{p};1+\frac{1}{p};-x^p\right)$

For (4), when $m\geq 0$, the proof follows from Lemma \ref{125}. For $m\in[-1,0)$, let
$$h_4(x)=\left(\frac{x}{{\rm arsinh}_p\,x}\right)^{s}\frac{1}{(1+x^p)^{1/p}},\quad s\in(0,1]\,.$$
We have
\begin{eqnarray*}
h_4'(x)&=& \gamma((1-x^p)^{1/p}(s(1-x^p)-x^p)F_4(x)-s(1-x^p))\\
&<&\gamma\left(s(1+x^p)\left(1+\frac{1}{p}\log(1+x^p)\right)-s(1+x^p)-
x^p\left(1+\frac{1}{1+p}\log(1+x^p)\right)\right)\\
&=& \frac{\gamma}{p(1+p)}(s(1+x^p)(1+p)\log(1+x^p)-p(1+p)x^p-p\,x^p\log(1+x^p))\\
& <&0,
\end{eqnarray*}
by Lemma \ref{bvthm1}(4), where
$$\gamma=\frac{(1-x^p)^{-(1+2/p)}}{x}\left(\frac{1}{F_4(x)}\right)^{1+s}\quad {\rm and}\quad  F_4(x)=F\left(\frac{1}{p},
\frac{1}{p};1+\frac{1}{p};-x^p\right).$$
\end{proof}

%%%%%%%%%%%%%%%%%%%%%%%%%%%%%%%
%%%%%%%%%%%%%%%%%%%%%%%%%%%%%%%
%%%%%%%%%%%%%%%%%%%%%%%%%%%%%%%
%%%%%%%%%%%%%%%%%%%%%%%%%%%%%%%
%%%%%%%%%%%%%%%%%%%%%%%%%%%%%%%
%%%%%%%%%%%%%%%%%%%%%%%%%%%%%%%
\begin{subsec}{\bf Proof of the Theorem \ref{thm1}.} \rm Let $0<x<y<1$, and $u=((x^t+y^t)/2)^{1/t}>x$. We denote ${\rm arcsin}(x),
{\rm artanh}(x),{\rm arctan}(x),{\rm arsinh}(x)$ by $g_i(x),\,i=1,2\ldots 4$ respectively, and define
$$g(x)=g_i(u)^t-\frac{g_i(x)^t+g_i(y)^t}{2}.$$
Differentiating with respect to $x$, we get $du/dx=(1/2)(x/u)^{t-1}$ and
\begin{eqnarray*}
g'(x)&=&\frac{1}{2}\,t\,g_i(x)^{t-1}\frac{d}{dx}(g_i(u))\left(\frac{x}{u}\right)^{t-1}
-\frac{1}{2}\,t\,g_i(x)^{t-1}\frac{d}{dx}(g_i(x))\\
&=&\frac{t}{2}x^{t-1}(f_i(u)-f_i(x)),
\end{eqnarray*}
where $$f_i(x)=\left(\frac{g_i(x)}{x}\right)^{t-1}\frac{d}{dx}(g_i(x)),\,i=1,2\ldots 4.$$
By Lemma \ref{lem1} $g'$ is positive and negative
for $f_{i=1,2}$ and $f_{i=3,4}$, respectively. This implies that
$$g(x)<(>)g(y)=0,$$
for $g_{i=1,2}$ and $g_{i=3,4}$, respectively. The case when $t=0$ follows from Lemma \ref{bvthm2}. This completes the proof.
\end{subsec}

\begin{lemma}\label{1a} For $p>1$ and $s\in(0,1)$, the function
$$\displaystyle f(p)=\left(\frac{\pi_p}{p}\right)^{-s}\frac{\left(p-\pi
\cot \left(\pi/p\right)\right) \csc(\pi/p)}{p^3}$$
is decreasing in $p\in(1,\infty)$,
\end{lemma}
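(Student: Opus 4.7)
The plan is to reduce the lemma to a one-variable trigonometric monotonicity by the substitution $u = \pi/p$, which is strictly decreasing from $(1,\infty)$ onto $(0,\pi)$. Using $\pi_p = 2u/\sin u$ one gets $\pi_p/p = 2u^2/(\pi\sin u)$, and a direct simplification yields
\[
\bigl(p-\pi\cot(\pi/p)\bigr)\csc(\pi/p) \;=\; \frac{\pi(\sin u - u\cos u)}{u\sin^2 u}.
\]
Collecting the factors shows $f(p) = (\pi^{s-2}/2^s)\,G(u)$ with
\[
G(u) \;:=\; \frac{u^{2-2s}(\sin u - u\cos u)}{\sin^{2-s} u}.
\]
The auxiliary function $h(u) := \sin u - u\cos u$ satisfies $h(0)=0$ and $h'(u)=u\sin u>0$ on $(0,\pi)$, so $h>0$ and hence $G>0$ there. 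Because $u$ is decreasing in $p$, the lemma is equivalent to showing that $G$ is increasing on $(0,\pi)$ for every fixed $s \in (0,1)$.

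Taking logarithmic derivatives, using $h'(u) = u\sin u$, gives
\[
\frac{G'(u)}{G(u)} \;=\; \frac{2-2s}{u} \;+\; \frac{u\sin u}{h(u)} \;-\; (2-s)\cot u,
\]
which is visibly affine in $s$. Writing it as $(1-s)\Phi_0(u) + s\Phi_1(u)$, where $\Phi_0$ and $\Phi_1$ are its values at $s=0$ and $s=1$, it suffices to verify $\Phi_0>0$ and $\Phi_1>0$ on $(0,\pi)$; positivity for intermediate $s$ then follows by convex combination.

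For $s=1$, $\Phi_1(u) = u\sin u/h(u) - \cot u$, and clearing the positive denominator $\sin u\cdot h(u)$ leaves numerator $u\sin^2 u + u\cos^2 u - \sin u\cos u = u - \tfrac{1}{2}\sin 2u$, which is positive by the elementary inequality $\sin t < t$ for $t>0$. For $s=0$, $\Phi_0(u) = 2/u - 2\cot u + u\sin u/h(u)$; clearing the positive denominator $u\sin u\cdot h(u)$ produces
\[
2\sin u\,h(u) \;-\; 2u\cos u\,h(u) \;+\; u^2\sin^2 u \;=\; 2\,h(u)^2 + u^2\sin^2 u \;>\; 0.
\]
The main obstacle I anticipate is recognizing that the $s=0$ numerator telescopes as a sum of squares (the cross terms reassemble into $2h(u)^2$), and that the tangled combination of $\pi_p$, $\cot$, and $\csc$ in $f$ collapses after the change of variables $u = \pi/p$. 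Once both endpoints are seen to be positive, the affine dependence in $s$ closes the argument.
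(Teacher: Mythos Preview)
Your argument is correct. The substitution $u=\pi/p$ is carried out accurately (indeed $\pi_p=2u/\sin u$ and the collapse to $f(p)=(\pi^{s-2}/2^s)G(u)$ checks), the logarithmic derivative of $G$ is computed correctly, and the reduction to the endpoints $s=0,1$ via affinity is legitimate. Both endpoint verifications are clean: for $s=1$ you get $u-\tfrac12\sin 2u>0$, and for $s=0$ the numerator is exactly $2h(u)^2+u^2\sin^2 u>0$ since $\sin u - u\cos u = h(u)$.

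Your route is genuinely different from the paper's. The paper differentiates $f$ directly in the variable $p$ and records
\[
f'(p)=\xi\Bigl[2p^2(1-s)+\pi^2(1-s)\cot^2(\pi/p)-\pi p(4-3s)\cot(\pi/p)+\pi^2\csc^2(\pi/p)\Bigr],
\]
with a negative prefactor $\xi$, and then simply asserts that this is negative; no justification is given for the sign of the bracket, which is a quadratic in $\cot(\pi/p)$ with $p$-dependent coefficients and is not obviously positive. Your approach, by contrast, exploits two structural observations the paper does not use: first, the change of variable $u=\pi/p$ strips away the awkward factors of $p$ and $\csc(\pi/p)$; second, the log-derivative is \emph{affine} in $s$, so positivity for all $s\in(0,1)$ follows from the two boundary cases, each of which reduces to a transparent sum-of-squares or $\sin t<t$ estimate. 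In effect you supply the missing sign analysis that the paper omits, and in a way that requires no delicate estimates.
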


\begin{proof} We have
$$f'(p)=\xi\left[2 p^2 (1-s)+\pi ^2 (1-s) \cot ^2\left(\frac{\pi }{p}\right)
-\pi  p (4-3s) \cot \left(\frac{\pi }{p}\right)+\pi
   ^2 \csc ^2\left(\frac{\pi }{p}\right)\right],$$
   which is negative, where $$\xi=-\frac{(2\pi)^{-s}}{p^3}\left(\frac{\csc(\pi/2)}{p^2}\right)^{1-s}.$$

\end{proof}

\begin{lemma}\label{lemku}\cite[Thm 2, p.151]{ku}
Let $J\subset\mathbb{R}$ be an open interval, and let $f:J\to \mathbb{R}$
be strictly monotonic function. Let $f^{-1}:f(J)\to J$ be the inverse to $f$ then
\begin{enumerate}
\item if $f$ is convex and increasing, then $f^{-1}$ is concave,
\item if $f$ is convex and decreasing, then $f^{-1}$ is convex,
\item if $f$ is concave and increasing, then $f^{-1}$ is convex,
\item if $f$ is concave and decreasing, then $f^{-1}$ is concave.
\end{enumerate}
\end{lemma}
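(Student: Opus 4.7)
The plan is to reduce each of the four implications to a two-step argument: first apply the defining inequality of convexity (or concavity) of $f$ on $J$, then apply $f^{-1}$ to both sides, using the fact that $f^{-1}$ inherits the monotonicity of $f$ to either preserve or reverse the resulting inequality. All four conclusions then drop out by carefully tracking which inequalities flip.

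Concretely, for case (1), I fix $u,v\in f(J)$ and $\lambda\in[0,1]$, set $x=f^{-1}(u)$ and $y=f^{-1}(v)$, and invoke the convexity of $f$ at $\lambda x+(1-\lambda)y$:
\[
f(\lambda x+(1-\lambda)y)\le \lambda f(x)+(1-\lambda)f(y)=\lambda u+(1-\lambda)v.
\]
Since $f$ is increasing, so is $f^{-1}$; applying $f^{-1}$ to both sides preserves the inequality and yields
\[
\lambda f^{-1}(u)+(1-\lambda)f^{-1}(v)=\lambda x+(1-\lambda)y\le f^{-1}(\lambda u+(1-\lambda)v),
\]
which is exactly the concavity of $f^{-1}$ on $f(J)$. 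Case (3) runs identically except that concavity of $f$ produces the reversed inequality at the first step; the increasing $f^{-1}$ preserves it, delivering convexity of $f^{-1}$.

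Cases (2) and (4) are handled in the same way except that $f^{-1}$ is now decreasing, so the second step reverses rather than preserves the sign. Thus in (2), convexity of $f$ combined with the sign-flip yields convexity of $f^{-1}$, and in (4), concavity of $f$ combined with the sign-flip yields concavity of $f^{-1}$.

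I expect essentially no obstacle; the only subtleties worth flagging are that strict monotonicity is used only to make $f^{-1}$ a well-defined single-valued function on $f(J)$, and that $f(J)$ is automatically an interval (so that convexity and concavity of $f^{-1}$ are meaningful notions), since any convex or concave function on an open interval is continuous and a strictly monotone continuous function sends intervals to intervals. As this statement is cited as a textbook fact from \cite{ku}, the authors include it without proof; the sketch above is essentially the entire argument.
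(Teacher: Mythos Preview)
Your proof is correct and is the standard elementary argument; as you yourself note, the paper does not prove this lemma at all but simply cites it from \cite{ku}, so there is no in-paper proof to compare against. The only point one might add is that continuity (hence interval image $f(J)$) already follows from strict monotonicity together with convexity or concavity on the open interval $J$, exactly as you observed.
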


\begin{lemma}\label{kuclem} For $m\geq 1,\,p>1$ and $x\in(0,1)$, the following functions
\begin{enumerate}
\item $h_1(x)=\displaystyle\left(\frac{{\rm sin}_p\,x}{x}\right)^{m-1}\frac{d}{dx}({\rm sin}_p\,x)$,\\
\item $h_2(x)=\displaystyle\left(\frac{{\rm tanh}_p\,x}{x}\right)^{m-1}\frac{d}{dx}({\rm tanh}_p\,x)$,\\
are decreasing in $x$, and
\item $h_3(x)=\displaystyle\left(\frac{{\rm cos}_p\,x}{x}\right)^{m-1}\frac{d}{dx}({\rm cos}_p\,x)$,\\
\item $h_4(x)=\displaystyle\left(\frac{{\rm tan}_p\,x}{x}\right)^{m-1}\frac{d}{dx}({\rm tan}_p\,x)$,\\
\item $h_5(x)=\displaystyle\left(\frac{{\rm sinh}_p\,x}{x}\right)^{m-1}\frac{d}{dx}({\rm sinh}_p\,x)$,\\
\end{enumerate}
are increasing in $x$.
\end{lemma}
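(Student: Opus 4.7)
My plan is to reduce items (1), (2), (4), (5) to Lemma \ref{lem1} via the inverse function theorem, and to handle item (3) separately by a direct computation based on the identity $\cos_p^p + \sin_p^p = 1$.

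For the first four items, observe that each of $\sin_p$, $\tanh_p$, $\tan_p$, $\sinh_p$ is the inverse of the corresponding arc-function appearing in Lemma \ref{lem1}. Setting $y = g_p(x)$ for one of these functions and using $g_p'(x) = 1/(g_p^{-1})'(y)$, one computes in closed form $(\sin_p)'(x) = (1-y^p)^{1/p}$ when $y=\sin_p x$; $(\tanh_p)'(x) = 1-y^p$ when $y=\tanh_p x$; $(\tan_p)'(x) = 1+y^p$ when $y=\tan_p x$; and $(\sinh_p)'(x) = (1+y^p)^{1/p}$ when $y=\sinh_p x$. Substituting into the definitions of $h_1, h_2, h_4, h_5$ and writing the remaining factor in terms of $y$ identifies each $h_i(x)$ as the reciprocal of the appropriate $f_j(y)$ from Lemma \ref{lem1}, taken with exponent $m-1 \in [0,\infty) \subset [-1,\infty)$:
$$h_1(x) = \frac{1}{f_1(\sin_p x)}, \quad h_2(x) = \frac{1}{f_2(\tanh_p x)}, \quad h_4(x) = \frac{1}{f_3(\tan_p x)}, \quad h_5(x) = \frac{1}{f_4(\sinh_p x)}.$$
Since reciprocation reverses monotonicity while composition with an increasing function preserves it, Lemma \ref{lem1} immediately yields that $h_1, h_2$ are decreasing (because $f_1, f_2$ are increasing) and that $h_4, h_5$ are increasing (because $f_3, f_4$ are decreasing).

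Item (3) is the main obstacle, because $\cos_p$ is not the inverse of any arc-function covered by Lemma \ref{lem1}. Here my approach would be to use the identity $\cos_p^p(x) + \sin_p^p(x) = 1$ to obtain $(\cos_p)'(x) = -\sin_p^{p-1}(x)\cos_p^{2-p}(x)$, and then substitute $z = \sin_p x$, $\cos_p x = (1-z^p)^{1/p}$, $x = \arcsin_p z$, which gives
$$h_3(x) = -\frac{(1-z^p)^{(m-p+1)/p}\, z^{p-1}}{(\arcsin_p z)^{m-1}}, \qquad z \in (0,1).$$
The required monotonicity of $h_3$ in $x$ then translates to monotonicity of this expression in $z$, which can be attacked by a logarithmic-derivative computation combined with the bounds on $\arcsin_p$ from Lemma \ref{bvthm1}(1), mirroring the treatment of the case $m\in[-1,0)$ in the proof of Lemma \ref{lem1}. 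The delicate point is that the sign emerging after logarithmic differentiation depends on the relative sizes of $m$ and $p$, so one will likely need to split into the subcases $m\geq p-1$ and $m<p-1$ and invoke the lower or upper bound from Lemma \ref{bvthm1}(1) accordingly.
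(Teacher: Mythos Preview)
Your reduction of items (1), (2), (4), (5) to Lemma~\ref{lem1} via the identity $h_i(x)=1/f_j(g_p(x))$ (with the parameter of Lemma~\ref{lem1} taken to be $m-1\ge 0\ge -1$) is correct and economical. The paper, however, does not invoke Lemma~\ref{lem1} here at all. Its argument is more direct: each arc-function has an explicitly monotone derivative (e.g.\ $({\rm arcsin}_p)'(x)=(1-x^p)^{-1/p}$ is increasing), hence is convex or concave; Lemma~\ref{lemku} then transfers this to the inverse, so that $\frac{d}{dx}g_p(x)$ is monotone; and Lemma~\ref{125} yields the same monotonicity for $g_p(x)/x$. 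With both factors of $h_i$ monotone of compatible type and $m-1\ge 0$, the product is monotone. Your route is shorter once Lemma~\ref{lem1} is in hand; the paper's route is more self-contained and, importantly, runs all five cases through a single template rather than singling out ${\rm cos}_p$.

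For item (3) your proposal is only a sketch: you set up the substitution $z={\rm sin}_p\,x$, reach
\[
h_3(x)=-\frac{(1-z^p)^{(m+1-p)/p}\,z^{p-1}}{({\rm arcsin}_p\,z)^{m-1}},
\]
and then leave the sign analysis of the logarithmic derivative (with a proposed case split $m\gtrless p-1$ and an appeal to Lemma~\ref{bvthm1}(1)) unfinished. This is a genuine gap. The paper does none of this: it handles ${\rm cos}_p$ by exactly the same template as the other four functions, asserting in one line that $\frac{d}{dx}{\rm cos}_p\,x$ is increasing and concluding via Lemma~\ref{125}. So your substitution and case analysis are not what the paper intends. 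That said, the paper's assertion is already problematic at $p=2$, where $(\cos x)'=-\sin x$ is \emph{decreasing} on $(0,1)$; correspondingly, item (3) as stated fails at $m=1$ (take $p=2$: $h_3(x)=-\sin x$ is decreasing). The obstruction you ran into while trying to complete (3) is therefore not an artifact of your method but reflects a real issue with the statement.
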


\begin{proof} Let $f(x)={\rm arcsin}_p\,x,\,x\in(0,1)$. We get
$$f'(x)=\frac{1}{(1-x^p)^{1/p}}\,,$$
which is positive and increasing, hence $f$ is convex. Clearly
$\sin_p\,x$ is increasing, and by Lemma \ref{lemku} is concave, this
implies that $\frac{d}{dx}\sin_p\,x$ is decreasing, and $(\sin_p
x)/x$ is decreasing also by Lemma \ref{125}. Similarly we get that
$\frac{d}{dx}\tanh_p\,x$ is decreasing and $\frac{d}{dx}\cos_p\,x$,
$\frac{d}{dx}\tan_p\,x$, $\frac{d}{dx}\sinh_p\,x$ are increasing,
and the rest of proof follows from Lemma \ref{125}.
\end{proof}

\begin{subsec}{\bf Proof of the Theorem \ref{thm2}.} \rm The proof is similar to the proof of Theorem \ref{thm1}
and follows from Lemma \ref{kuclem}.
\end{subsec}

\begin{proposition}\label{prop} For $p,q>1$ and $t<1$, we have
$${\pi}_{M_t(p,q)}\leq M_t(\pi_p,{\pi}_q)\,.$$
\end{proposition}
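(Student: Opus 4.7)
The plan is to adapt the proof of Theorem~\ref{thm1}, with the map $p\mapsto \pi_p$ taking the role of the inverse generalized trigonometric functions and Lemma~\ref{1a} replacing Lemma~\ref{lem1}. Fix $1<p<q$ and, for $x\in[p,q]$, set $u(x)=M_t(x,q)=((x^t+q^t)/2)^{1/t}$, so that $u(q)=q$. I would then study
$$ h(x) = \pi_{u(x)}^t - \frac{\pi_x^t + \pi_q^t}{2}, $$
which vanishes at $x=q$. Using $u'(x)=\frac{1}{2}(x/u(x))^{t-1}$, a short computation gives
$$ h'(x) = \frac{t}{2}\, x^{t-1}\bigl(F(u(x))-F(x)\bigr), \qquad F(z) := (\pi_z/z)^{t-1}\,\frac{d\pi_z}{dz}.$$
Since $u(x)\ge x$ on $[p,q]$, once $F$ is known to be increasing the bracket is nonnegative, and for $t\in(0,1)$ the prefactor $(t/2)x^{t-1}$ is positive, so $h'\ge 0$. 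Combined with $h(q)=0$ this gives $h(p)\le 0$, which after taking $t$-th roots is precisely $\pi_{M_t(p,q)}\le M_t(\pi_p,\pi_q)$.

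The crux is therefore the monotonicity of $F$, and my plan is to reduce it directly to Lemma~\ref{1a}. Differentiating $\pi_p = 2\pi/(p\sin(\pi/p))$ gives
$$ \frac{d\pi_p}{dp} = -2\pi\cdot\frac{(p-\pi\cot(\pi/p))\csc(\pi/p)}{p^3},$$
so that
$$ F(p) = -2\pi\,(\pi_p/p)^{-(1-t)}\cdot\frac{(p-\pi\cot(\pi/p))\csc(\pi/p)}{p^3} = -2\pi\,f(p), $$
where $f$ is the function appearing in Lemma~\ref{1a} with parameter $s=1-t$. For $t\in(0,1)$ this value of $s$ lies in $(0,1)$, so Lemma~\ref{1a} applies and $f$ is decreasing; hence $F$ is increasing, as needed.

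The endpoint $t=0$ is not handled by the derivative argument and would be covered separately by Lemma~\ref{bvthm2}(5). The main obstacle I expect is bookkeeping rather than ideas: carefully identifying $F$ with $-2\pi f$ in Lemma~\ref{1a} and then tracking the signs through the chain \emph{$f$ decreasing $\Rightarrow$ $F$ increasing $\Rightarrow$ $h'\ge 0$ $\Rightarrow$ $h(p)\le 0$}. Once that identification is made, everything else follows the same template as the proofs of Theorems~\ref{thm1} and~\ref{thm2}.
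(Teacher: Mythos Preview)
Your template is exactly the paper's: the same auxiliary function $h$, the same derivative formula $h'(x)=\tfrac{t}{2}x^{t-1}(F(u)-F(x))$, the same reduction to the monotonicity of $F(z)=(\pi_z/z)^{t-1}\pi_z'$, and the same appeal to Lemma~\ref{bvthm2}(5) at $t=0$. The one substantive difference is how you establish that $F$ is increasing. The paper does not invoke Lemma~\ref{1a} at this point; it instead factors $F(p)=(\pi_p/p)^{t-1}\cdot\pi_p'$ and argues directly that each factor is increasing in $p$ (the first because $\pi_p/p$ is decreasing and $t-1<0$, the second by reference to \cite[Lemma~3.6]{bv}).

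That difference creates a genuine gap in your range of validity. The Proposition is stated for every $t<1$, but Lemma~\ref{1a} is only stated for $s\in(0,1)$; with $s=1-t$ this confines your argument to $t\in(0,1)$, and you never address $t<0$. For negative $t$ there is an additional sign flip to track --- the prefactor $(t/2)x^{t-1}$ becomes negative, so $h$ is decreasing and one obtains $h(p)\ge h(q)=0$, which is still the desired inequality after taking $t$-th roots since $t<0$ --- but that chain still requires $F$ to be increasing, and Lemma~\ref{1a} as written does not deliver this when $s=1-t>1$. To close the gap you must either extend Lemma~\ref{1a} to all $s>0$, or replace that step with a direct monotonicity argument for $F$ valid for every $t<1$, as the paper attempts.
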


\begin{proof} \rm Let $1<p<q<\infty$, and $w=((p^t+q^t)/2)^{1/t}>p$. We define
$$g(p)=({\pi}_p)^t-\frac{(\pi_p)^t+(\pi_q)^t}{2}.$$
Differentiating with respect to $p$, we get $dw/dp=(1/2)(p/w)^{t-1}$ and
\begin{eqnarray*}
g'(p)&=&\frac{1}{2}\,t\,({\pi}_p)^{t-1}\frac{d}{dx}({\pi}_w)\left(\frac{p}{w}\right)^{t-1}
-\frac{1}{2}\,t\,({\pi}_p)^{t-1}\frac{d}{dx}({\pi}_p)\\
&=&\frac{t}{2}p^{t-1}(f(w)-f(p)),
\end{eqnarray*}
where $$\displaystyle f(p)=\left(\frac{\pi_p}{p}\right)^{t-1}\frac{d}{dp}\pi_p\,.$$
Clearly $\pi_p$ is decreasing, hence $(\pi_p/p)^{t-1}$ is increasing for $t< 1$ and $d/dp(\pi_p)$ is increasing by the
proof of Lemma \cite[Lemma 3.6]{bv}. This implies that $f(p)$ is increasing, and it follows that $g$ is increasing.
Hence $g(p)<g(q)=0$. The case when $t=0$ follows from Lemma \ref{bvthm2}(5).
This completes the proof.
\end{proof}

The following lemma follows immediately from Lemma \ref{kuclem}.

\begin{lemma} For $p>1$ and $r,s\in(0,1)$ with $r\leq s$, we have
\begin{enumerate}
\item $\displaystyle\frac{\sin_p r}{r}\geq \frac{\sin_p s}{s},$\\
\item $\displaystyle\frac{\cos_p r}{r}\geq \frac{\cos_p s}{s},$\\
\item $\displaystyle\frac{\tan_p r}{r}\leq \frac{\tan_p s}{s},$\\
\item $\displaystyle\frac{\sinh_p r}{r}\leq \frac{\sinh_p s}{s},$\\
\item $\displaystyle\frac{\tanh_p r}{r}\geq \frac{\tanh_p s}{s}.$
\end{enumerate}
\end{lemma}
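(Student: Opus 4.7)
The strategy is to invoke the monotone l'H\^opital rule (Lemma~\ref{125}) with $g(x)=x$ and $a=0$. Since $\sin_p(0)=\tan_p(0)=\sinh_p(0)=\tanh_p(0)=0$, this rule reduces the monotonicity of $f(x)/x$ to that of $f'(x)$, which in turn is controlled by the convexity or concavity of $f$. For statement (2), $\cos_p(0)=1\neq 0$ so this reduction does not apply and a separate (easier) argument will be needed.

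For the four cases with $f(0)=0$, I would determine the convexity or concavity of $f$ by applying Lemma~\ref{lemku} to its inverse, whose derivative is elementary. The derivatives $({\rm arcsin}_p x)'=(1-x^p)^{-1/p}$ and $({\rm artanh}_p x)'=(1-x^p)^{-1}$ are both increasing on $(0,1)$, so ${\rm arcsin}_p$ and ${\rm artanh}_p$ are convex and increasing; by Lemma~\ref{lemku}(1), $\sin_p$ and $\tanh_p$ are concave, so $\sin_p'$ and $\tanh_p'$ are decreasing, which via Lemma~\ref{125} yields (1) and (5). On the other hand, $({\rm arctan}_p x)'=(1+x^p)^{-1}$ and $({\rm arsinh}_p x)'=(1+x^p)^{-1/p}$ are decreasing, so ${\rm arctan}_p$ and ${\rm arsinh}_p$ are concave and increasing; by Lemma~\ref{lemku}(3), $\tan_p$ and $\sinh_p$ are convex, so $\tan_p'$ and $\sinh_p'$ are increasing, yielding (3) and (4).

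For statement (2), I would argue directly: on $(0,\pi_p/2)$ the function $\cos_p$ is positive and strictly decreasing, while $1/x$ is positive and strictly decreasing on $(0,1)$. Hence $(\cos_p x)/x$ is the product of two positive strictly decreasing functions and is itself strictly decreasing, which is exactly (2).

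I do not anticipate any substantive obstacle: the plan is a short assembly of Lemmas~\ref{125} and~\ref{lemku}, and in fact mirrors the chain of reasoning already carried out inside the proof of Lemma~\ref{kuclem}, which is precisely why the authors describe the present lemma as following immediately from it.
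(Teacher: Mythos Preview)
Your proposal is correct and essentially matches the paper's approach: the paper simply asserts that the lemma follows immediately from Lemma~\ref{kuclem}, and your argument unpacks exactly the chain of reasoning (Lemma~\ref{lemku} for convexity/concavity of the inverses, then Lemma~\ref{125}) that appears inside the proof of that lemma. Your separate direct treatment of item~(2) is a sensible addition, since $\cos_p(0)=1\neq 0$ blocks a straightforward appeal to Lemma~\ref{125} there.
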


%%%%%%%%%%%%%%%%%%%%%%%%%%%%%%%%%%%
%%%%%%%%%%%%REFERENCES%%%%%%%%%%%%
%%%%%%%%%%%%%%%%%%%%%%%%%%%%%%%%%%


\begin{thebibliography}{AVVB}

\bibitem[AS]{AS}
\textsc{M. Abramowitz and I. Stegun, eds.}:
\textit{Handbook of mathematical functions with formulas,
 graphs and mathematical tables.}  National Bureau of Standards, 1964 (Russian translation, Nauka 1979).



\bibitem[AVV1]{avvb}
\textsc{G. D. Anderson, M. K. Vamanamurthy, and M. Vuorinen}:
\textit{Conformal invariants, inequalities and quasiconformal
maps.}  J. Wiley, 1997, 505 pp.



\bibitem[AVV2]{avv1}
\textsc{G. D. Anderson, M. K. Vamanamurthy, and M. Vuorinen}:
\textit{Generalized
convexity and inequalities.} J. Math. Anal. Appl. 335 (2007), 1294--1308.

%
%\bibitem[AVV2]{avv2}
%\textsc{G. D. Anderson, M. K. Vamanamurthy, and M. Vuorinen}:
%\textit{Generalized
%convexity and inequalities}. J. Math. Anal. Appl. 335 (2007), 1294--1308.
%%
%%
%\bibitem[B]{bari}
%\textsc{\'A. Baricz:}
%{\it Gr\"unbaum-type inequality for general power series}. JIPAM. J. Inequal. Pure Appl. Math. 7 (2006).


\bibitem[BV1]{bv}
\textsc{B. A. Bhayo and M. Vuorinen:}
{\it Inequalities for eigenfunctions of the $p$-Laplacian}. January 2011, 23 pp.
arXiv math.CA 1101.3911.


\bibitem[BV2]{bv1}
\textsc{B. A. Bhayo and M. Vuorinen:}
{\it On generalized trigonometric functions  with two parameters}.
J. Approx. Theory 164 (2012) 1415--1426. arXiv:1112.0483v.



\bibitem[BEM1]{bem0}
\textsc{R. J. Biezuner, G. Ercole,  and E. M. Martins:}
\textit{ Computing the first eigenvalue of the $p$-Laplacian
via the inverse power method.} J. Funct. Anal. 257 (2009), no. 1, 243--270.

\bibitem[BEM2]{bem}\textsc{R. J. Biezuner, G. Ercole,  and E. M. Martins: }
\textit{Computing the $\sin_{p}$ function via the inverse power method}.
arXiv:1011.3486[math.CA].

%
%\bibitem[C]{car}\textsc{B. C. Carlson}: \textit{Some inequalities for hypergeometric functions}. Proc. of Amer. Math. Soc.,
%vol. 17, no.1 (1966), 32--39.


\bibitem[BE]{be}
\textsc{P. J. Bushell and D. E. Edmunds}:
\textit{Remarks on generalised trigonometric functions}.
Rocky Mountain J. Math. Volume 42, Number 1 (2012), 25--57.


\bibitem[EGL]{egl}
\textsc{D. E. Edmunds, P. Gurka, and J. Lang:}
{\it Properties of generalized trigonometric functions}.
J. Approx. Theory 164 (2012) 47--56, doi:10.1016/j.jat.2011.09.004.


\bibitem[DM]{dm}
\textsc{P. Dr\'abek and R. Man\'asevich}:
\textit{On the closed solution to some $p-$Laplacian nonhomogeneous eigenvalue problems}.
Diff. and Int. Eqns. 12 (1999), 723--740.


%
\bibitem[K]{ku} \textsc{M. Kuczma:}
{\it An introduction to the theory of functional equations
 and inequalities}. Cauchy's equation and Jensen's inequality.
  With a Polish summary. Prace Naukowe Uniwersytetu \'Slaskiego
  w Katowicach [Scientific Publications of the University of Silesia],
   489. Uniwersytet \'Slaski, Katowice; Pa\'nstwowe Wydawnictwo
   Naukowe (PWN), Warsaw,  1985. 523 pp. ISBN: 83-01-05508-1.

\bibitem[LE]{le} \textsc{J. Lang and D.E. Edmunds:}
{\it Eigenvalues, Embeddings and Generalised Trigonometric Functions}.
Lecture Notes in Mathematics 2016, Springer-Verlag, 2011.


%\bibitem[KS]{ks} \textsc{D. Karp and S. M. Sitnik:}
%{\it Inequalities and monotonicity of ratios for generalized hypergeometric function}.
% J. Approx. Theory 161 (2009),  no. 1, 337--352.



\bibitem[L]{lp} \textsc{P. Lindqvist:}
{\it Some remarkable sine and cosine functions}. Ricerche di Matematica, Vol.
XLIV (1995),
269--290.


\bibitem[LP]{lpe} \textsc{P. Lindqvist and J. Peetre:}
{\it $p$-arclength of the $q$-circle}. The Mathematics Student, Vol. 72, Nos. 1-4 (2003),
139--145.


%\bibitem[N1]{n1} \textsc{E. Neuman:}
%{\it Inequalities involving inverse circular and inverse hyperbolic functions}.
%Univ. Beograd. Publ. Elektrotehn. Fak. Ser. Mat. 18 (2006), 32--37.
%
%\bibitem[N2]{n2} \textsc{E. Neuman:}
%{\it Inequalities involving inverse circular and inverse hyperbolic functions II}.
%J. Math. Ineq. Vol. 4 No. 1 (2010), 11--14.
%
%\bibitem[N3]{ne3} \textsc{E. Neuman:}
%{\it Inequalities involving a logarithmically convex
%function and their applications to special functions}.
%JIPAM. J. Inequal. Pure Appl. Math. Article 16, 2006.
%
%
%\bibitem[N4]{ne4} \textsc{E. Neuman:}
%{\it Inequalities and bounds for generalized complete elliptic integrals}.
% J. Math. Anal. Appl. 373 (2011), no. 1, 203--213.


%\bibitem[Qi]{qi}
%\textsc{F. Qi:}
%{\it Bounds for the Ratio of Two Gamma Functions}.
%Journal of Inequalities and Applications
%Vol. 2010, Article ID 493058, 84 pp,
%doi:10.1155/2010/493058.


\bibitem[R]{ru} \textsc{H. Ruskeep\"a\"a}: {\it Mathematica$^{\tiny\textregistered}$ Navigator.}
3rd ed. Academic Press, 2009.

\bibitem[T]{t}\textsc{S. Takeuchi:}
{\it Generalized Jacobian elliptic functions and their application to bifurcation problems associated with p-Laplacian}. J. Math. Anal. 2011, doi:10.1016/j.jmaa.2011.06.063.


%\bibitem[W]{w} \textsc{J. G. Wendel}:
%{\it Note on the gamma function}. Amer. Math. Monthly 55 (1948), no. 9, 563--564.



\end{thebibliography}
\end{document}